\documentclass[pdflatex,sn-mathphys-num]{sn-jnl}

\usepackage{graphicx}
\usepackage{amsmath,amssymb,amsfonts,mathrsfs,cancel,amsthm}
\usepackage{multirow,xcolor,textcomp,booktabs}
\usepackage{manyfoot,listings}
\usepackage[title]{appendix}
\usepackage{tcolorbox,bm,bbm}
\usepackage{algorithm2e}
\usepackage{tikz,pgfplots}
\usepackage{natbib}
\pgfplotsset{compat=newest}
\usetikzlibrary{arrows.meta}

\newcommand{\ren}{\mathbb{R}^n}
\newcommand{\re}{\mathbb{R}}

\newcommand{\blockcomment}[1]{}

\theoremstyle{thmstyleone}%
\newtheorem{theorem}{Theorem}
\newtheorem{lemma}[theorem]{Lemma}

\theoremstyle{thmstyletwo}%
\newtheorem{example}{Example}%
\newtheorem{remark}{Remark}%

\theoremstyle{thmstylethree}%
\newtheorem{definition}{Definition}%

\begin{document}

\title[Article Title]{A Certificate of Unboundedness for Polynomial Optimization Problems}


\author*[1]{\fnm{Rohan} \sur{Rele}}\email{rrele@asu.edu}

\author[1]{\fnm{Angelia} \sur{Nedi\'c}}\email{angelia.nedich@asu.edu}
\equalcont{These authors contributed equally to this work.}

\affil*[1]{\orgdiv{School of Electrical, Computer and Energy Engineering}, \orgname{Arizona State University}}


\abstract{Global polynomial optimization methods typically rely on compactness of the feasible region in order to find solutions. These methods can incur considerable computational expense and most commercially available solvers do not verify the existence of a solution prior to undergoing global search. In this manuscript we propose a simple pre-processing algorithm to determine if an arbitrary polynomial optimization problem is unbounded from below thereby providing information about the problem's asymptotic geometry prior to solving the problem if a solution can be found. }

\keywords{polynomial, optimization, sampling, existence, unboundedness}



\maketitle

\section{Introduction}\label{sec1}

Polynomial optimization \cite{Lasserre2001Global, Lasserre2015Introduction} 
arises in many engineering disciplines including control 
\cite{PauwelsHenrionLasserre2014}, 
statistics \cite{WuYangPolynomialStatistics, AyvazDeLathauwer2022}, 
operations research \cite{AhmadiMajumdar2016}, 
and robotics 
\cite{AhmadiHallMakadiaSindhwani2017, HolmesDumbgenBarfoot2024}. 
While the problem has been well studied from the lens of semialgebraic geometry 
\cite{BlekhermanParriloThomas2013, DrusvyatskiyLewis2013}, 
global methods for solving generic polynomial programs in moderate 
dimensions remain computationally challenging.  
Current methods fall broadly into two families:  
(i) Sum-of-Squares and moment relaxations 
\cite{Lasserre2001Global, Lasserre2015Introduction, Laurent2009SOS}, and  
(ii) Branch-and-Bound, or branch-and-reduce, algorithms 
\cite{Morrison2016BranchBoundSurvey}.

The convergence guarantees of both SOS hierarchies and branch and bound methods rely on the assumption that 
either the feasible set or the sublevel sets of the objective are compact.  
As highlighted in~\cite{Lasserre2015Introduction, Laurent2009SOS, Schweighofer2005}, 
this assumption frequently fails in practical polynomial systems. Some modern solvers include methods to determine whether the problem is unbounded from below during optimization; however, no modern solver explicitly computes rays upon which the objective diverges to $-\infty$ prior to global search.  Detecting such directions provides a mathematical certificate by which to definitively assert unboundedness of the problem. Without early detection solvers cannot certify a finite lower bound and global routines may hang, numerically fail or return artificial optima. Recently, \cite{vu2025lowerboundedness} introduced an algebraic approach for decidably determining whether a polynomial program is lower bounded; however, this method is computationally infeasible beyond very small instances. Motivated by this gap, we introduce a fast certificate that detects unbounded directions in polynomial optimization problems.

\subsection*{Notation and basic definitions}

For a set $X \subseteq \ren$ its \textit{indicator function} is given by
\[ 
\delta_X(x): = \begin{cases}
\begin{aligned}
    0 \quad x &\in X, \\
    +\infty \quad x &\notin X.
    \end{aligned}
\end{cases}\]
The \emph{asymptotic cone} \cite{auslender_teboulle_2002} of a closed set $X$ is 
\[
    X_\infty := \left\{ d\in\mathbb{R}^n : 
    \exists \{t_k\} \subseteq \re, \{x_k\} \subseteq X, t_k \to \infty \text{ and } 
    \frac{x_k}{t_k} \to d\text{ as } k \to \infty \right \}.
\]
A function $f: \ren \to \re \cup \{+\infty\}$ is called \textit{proper} on $X$ if the set 
\[
\text{dom}~f := \{ x \in X : f(x) < +\infty\}
\] is nonempty. 
For a proper function $f$, the \emph{asymptotic function} has the following closed form 
\[
    f_\infty(d) := 
    \liminf_{\substack{t\to\infty\\ d'\to d}}
        \frac{f(t d')}{t}\] 
        according to Definition 2.5.1  and Theorem 2.5.1 in~\cite{auslender_teboulle_2002}. 

\subsection*{Asymptotic certificate of unboundedness}

We consider the polynomial optimization problem
\begin{equation}\tag{POP}\label{POP}
    \inf_{x\in X} f(x),
\end{equation}
with a polynomial objective function $f: \ren \to \re$ and the set $X$ given by polynomial functions $g_i: \ren \to \re$ polynomial functions for $i \in \{1,\ldots,m\}$,
\[
    X := \{x\in\mathbb{R}^n : g_i(x)\le 0,\ i\in[m]\},
\]
where $[m]:=\{1,\ldots,m\}$. We assume that the set $X$ is nonempty.
Asymptotic analysis
\cite{auslender_teboulle_2002, rele2025existencesolutionsnonconvexminimization} provides the foundational framework for analyzing unboundedness in optimization problems. The word `unboundedness' has two-fold meaning: if the set $X$ is bounded, then trivially \eqref{POP} must be bounded from below and thus when we refer to `unboundedness' we refer to the case of the set $X$ being unbounded \textit{and} the problem \eqref{POP} being unbounded from below i.e., $\inf_{x \in X} f(x) = -\infty$. In the subsequent development we apply a well-known necessary condition for boundedness from below \cite{auslender_teboulle_2002, rele2025existencesolutionsnonconvexminimization, ButtazzoTomarelli1991} to the case of $F(x):= (f + \delta_X)(x)$. More specifically
\[ \text{if there exists } d \in X_\infty \backslash \{0\} : F_\infty(d) < 0 \implies \inf_{x \in X} f(x) = -\infty. \] By a property of the asymptotic function \cite{auslender_teboulle_2002} we have that $F_\infty(d) \geq f_\infty(d) + \delta_{X_\infty}(d) $ which simplifies the above statement to 
\begin{equation}
    \text{if there exists } d \in X_\infty \backslash \{0\} : f_\infty(d) < 0 \implies \inf_{x \in X} f(x) = -\infty. \label{logical-implication}
\end{equation}

Note that the statement \eqref{logical-implication} is not specific to polynomial problems but we apply it to \eqref{POP} in what follows. 

\subsection*{Contribution}

We introduce a fast, sampling-based certificate that tests condition \eqref{logical-implication} under a weaker hypothesis that holds specifically for polynomial optimization problems.  
The approach to checking individual directions is inexpensive, can be used as a solver-agnostic preprocessing tool, and detects unboundedness by explicitly identifying feasible descent directions at infinity.  
When such directions occupy a set of positive normalized surface measure on $\mathbb{S}^{n-1}$, the certificate detects unboundedness with high probability under random sampling.  
When no certifying direction is found, we provide statistical guarantees quantifying the likelihood that untested directions could yield unboundedness.

Section~2 develops the asymptotic framework underlying condition \eqref{logical-implication}.  
Section~3 presents the certification procedure, its computational complexity, sampling paradigms and confidence bounds on the number of samples required to accept an inconclusive outcome.  
Section~4 characterizes degenerate regimes in which certifying directions exist but occupy a set of zero measure.  
Section~5 discusses the implications of these results for unboundedness detection in global polynomial optimization solvers.

\section{Polynomial Asymptotics and Unboundedness}

We begin by restating Lemma~3.1 of \cite{rele2025existencesolutionsnonconvexminimization} 
in a form that emphasizes an equation used later in our analysis.

\begin{lemma}[Polynomial asymptotics along a ray]
\label{lem:poly-asymptotics}
Let $h:\mathbb{R}^n\to\mathbb{R}$ be a polynomial of degree $p$ with 
a unique decomposition 
\[
h(x)=\sum_{k=0}^p \phi_k(x),
\]
where each $\phi_k:\mathbb{R}^n\to\mathbb{R}$ is a homogeneous polynomial of degree $k$. 
Fix a direction $d\in\mathbb{R}^n\setminus\{0\}$ and define 
\[
\mu(d)=\max\{k : \phi_k(d)\neq 0\}.
\]
Then
\begin{equation}\label{poly-form}
h(t d)
=
t^{\mu(d)}\phi_{\mu(d)}(d)
\;+\;
\sum_{k=0}^{\mu(d)-1} t^k \phi_k(d)
\end{equation}
and the lower-order sum satisfies
\[
\sum_{k=0}^{\mu(d)-1} t^k \phi_k(d)
=
o\!\big(t^{\mu(d)}\big).
\]
Consequently,
\[
h_\infty(d)=
\begin{cases}
-\infty & \mu(d)\ge 2\ \text{and }\phi_{\mu(d)}(d)<0,\\[0.25em]
\phi_1(d) & \mu(d)=1,\\[0.25em]
+\infty & \mu(d)\ge 2\ \text{and }\phi_{\mu(d)}(d)>0.
\end{cases}
\]
\end{lemma}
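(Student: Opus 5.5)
The plan is to work directly from the homogeneous decomposition and read off the growth of $h$ along the ray $\{td : t>0\}$.

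\textbf{Expansion along the ray.} Each $\phi_k$ is homogeneous of degree $k$, so $\phi_k(td)=t^k\phi_k(d)$ for $t>0$. Hence $h(td)=\sum_{k=0}^p t^k\phi_k(d)$. By the definition of $\mu(d)$, every term with $k>\mu(d)$ vanishes, which gives \eqref{poly-form}. The little-$o$ claim follows from
\[
t^{-\mu(d)}\sum_{k<\mu(d)}t^k\phi_k(d)=\sum_{k<\mu(d)}t^{k-\mu(d)}\phi_k(d)\to 0,
\]
since this is a finite sum of negative powers of $t$ with fixed coefficients.

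\textbf{Asymptotic values along $d$.} I would evaluate
\[
\frac{h(td)}{t}=t^{\mu(d)-1}\bigl(\phi_{\mu(d)}(d)+o(1)\bigr).
\]
If $\mu(d)\ge 2$, the factor $t^{\mu(d)-1}\to\infty$ and the bracket tends to $\phi_{\mu(d)}(d)\neq 0$. So the ratio tends to $-\infty$ or $+\infty$ according to the sign of $\phi_{\mu(d)}(d)$. If $\mu(d)=1$, then $h(td)/t=\phi_1(d)+\phi_0/t\to\phi_1(d)$.

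\textbf{Main obstacle: the joint limit $d'\to d$.} The paper's $h_\infty$ is a $\liminf$ over $t\to\infty$ and $d'\to d$ jointly, so the step above only gives an upper bound. Since $d'=d$ is admissible, $h_\infty(d)\le\lim_t h(td)/t$. This upper bound already settles the $-\infty$ case completely.

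For the other two cases one needs a matching lower bound, and this is where I expect trouble. The natural approach is uniform control: write
\[
\frac{h(td')}{t}=\sum_k t^{k-1}\phi_k(d')
\]
and use continuity of each $\phi_k$ near $d$. This fails exactly for the terms $k>\mu(d)$. There $\phi_k(d)=0$, but $\phi_k(d')$ can be negative, and the factor $t^{k-1}$ can amplify it.

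Concrete counterexamples confirm this. For $h(x)=x_1-x_2^2$ at $d=(1,0)$ we have $\mu(d)=1$, but taking $d'=(1,t^{-1/4})$ gives
\[
\frac{h(td')}{t}=1-t^{1/2}\to-\infty.
\]
Similarly, for $h(x)=x_1^2-x_2^4$ at $d=(1,0)$ we have $\phi_2(d)>0$, yet $d'=(1,t^{-1/4})$ gives $h(td')/t=t-t^2\to-\infty$.

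So, with the joint $\liminf$, the lemma as written holds only in the $-\infty$ case. The $\phi_1(d)$ and $+\infty$ cases must be read with $h_\infty(d)$ meaning the radial limit $\lim_{t\to\infty}h(td)/t$. I believe this is the reading of Lemma~3.1 in \cite{rele2025existencesolutionsnonconvexminimization}, and the proof would state it explicitly. The point to flag for the rest of the paper is that the certificate in \eqref{logical-implication} only uses the $-\infty$ branch, or more generally the inequality $h_\infty(d)\le\lim_t h(td)/t$, and both remain valid under the joint definition.
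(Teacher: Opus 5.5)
Your argument matches the paper's proof. The paper also expands $h(td)=\sum_k t^k\phi_k(d)$ and shows the lower-order terms are $o(t^{\mu(d)})$. It then obtains the three cases by ``dividing $h(td)$ by $t$ and taking limits,'' i.e.\ only along the path $d'=d$, and identifies the result with the joint $\liminf$ that defines $h_\infty$. Your objection to that identification is correct, and your counterexamples check out. For $h=x_1-x_2^2$ and $h=x_1^2-x_2^4$ at $d=(1,0)$, the path $d'=(1,t^{-1/4})$ drives $h(td')/t\to-\infty$. Alternatively, $h_\infty$ is lower semicontinuous and equals $-\infty$ at every $d$ with $d_2\neq 0$. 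So under the paper's stated definition only two things are valid: the $-\infty$ branch and the bound $h_\infty(d)\le\lim_{t\to\infty}h(td)/t$. The lemma holds as stated once $h_\infty(d)$ is read as the radial limit, as you propose.

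Your closing remark about the rest of the paper is not right, however. The certificate rests on Theorem~\ref{thm:directional-certificate} via Lemma~\ref{lem:eventual-feasibility}, and the latter uses this lemma in the converse direction. From $h_\infty(d)<0$ it concludes that $d$ lies in one of the two negative cases, which needs precisely the $\phi_1(d)$ and $+\infty$ branches you refuted. With the joint definition, Theorem~\ref{thm:directional-certificate} is in fact false. Take $f=x_2^2-x_1$, $g_1=x_1^2-x_2^4$ and $d=(1,0)$: then $f_\infty(d)=-1$ and $(g_1)_\infty(d)=-\infty$, yet on $X=\{|x_1|\le x_2^2\}$ one has $f\ge x_2^2-|x_1|\ge 0$. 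Hence the radial reading must be adopted throughout the paper, not only here. Equivalently, the certificate can be stated directly in terms of $\mu(d)$ and the sign of $\phi_{\mu(d)}(d)$. The same example shows that \eqref{logical-implication} fails under either reading, since $(1,0)\in X_\infty$ is witnessed by $x_k=(k,\sqrt{k})$. So \eqref{logical-implication} cannot be what the certificate relies on.
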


\begin{proof}
Since each $\phi_k$ is homogeneous of degree $k$, we have 
$\phi_k(t d)=t^k\phi_k(d)$ for all $t\in\mathbb{R}$.
Hence
\[
h(td)=\sum_{k=0}^{p} t^k\phi_k(d)
      =t^{\mu(d)}\phi_{\mu(d)}(d)
       +\sum_{k=0}^{\mu(d)-1} t^k\phi_k(d),
\]
which establishes \eqref{poly-form}.  
Factoring out $t^{\mu(d)}$ shows that the remaining terms vanish after scaling:
\[
\frac{1}{t^{\mu(d)}}\sum_{k=0}^{\mu(d)-1} t^k\phi_k(d)
=
\sum_{k=0}^{\mu(d)-1} t^{k-\mu(d)}\phi_k(d)
\longrightarrow 0,
\]
since $k-\mu(d)<0$ for all terms.
Dividing $h(td)$ by $t$ and taking limits yields the formula for $h_\infty(d)$, with the extended limit 
$\pm\infty$ determined by the sign of $\phi_{\mu(d)}(d)$ when $\mu(d)\ge 2$, and the finite case given by $\phi_1(d)$ when $\mu(d)=1$.
\end{proof}

\begin{remark}[Degree parity is not informative in multiple variables]
For univariate polynomials $(n=1)$, the parity of the leading degree determines the sign of the function
as $|x|\to\infty$.  
When $n\ge 2$, this is no longer true: an even-degree homogeneous polynomial may take both positive and negative values depending on the direction.  
For example,
\[
\phi_4(x_1,x_2)=x_1^4-x_2^4
\]
is of even degree but is indefinite.  
Thus the sign of $h_\infty(d)$ depends on the direction $d$ rather than on the degree parity of $h$.
\end{remark}

Next we introduce a lemma which motivates the eventual claim of unboundedness. 

\begin{lemma}[Eventual feasibility]\label{lem:eventual-feasibility}
Let $h:\mathbb{R}^n\to\mathbb{R}$ be a polynomial and let $d\in\mathbb{R}^n\setminus\{0\}$ be a direction.
If $h_\infty(d) < 0$, then
\[
h(td) \to -\infty \quad \text{as } t \to \infty,
\]
and in particular there exists $T>0$ such that
\[
h(td) < 0 \quad \forall\, t \ge T.
\]
\end{lemma}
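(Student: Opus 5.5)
The plan is to reduce everything to the explicit ray formula \eqref{poly-form} of Lemma~\ref{lem:poly-asymptotics} and use the case table for $h_\infty(d)$ given there. Since $h_\infty(d)<0$, the case $\mu(d)\ge 2$ with $\phi_{\mu(d)}(d)>0$ is excluded, because it gives $h_\infty(d)=+\infty$. Two cases remain. Either $\mu(d)\ge 2$ and $\phi_{\mu(d)}(d)<0$, or $\mu(d)=1$ and $\phi_1(d)=h_\infty(d)<0$.

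One degenerate case needs checking first: $\mu(d)=0$, or $\phi_k(d)=0$ for all $k\ge 1$. Then $h(td)=\phi_0$ is constant along the ray. The table in Lemma~\ref{lem:poly-asymptotics} does not list this case. I would compute $h_\infty(d)$ directly from the $\liminf$ definition, with $d'\to d$.
\begin{itemize}
\item If $\mu(d')\ge 2$ with $\phi_{\mu(d')}(d')<0$ for $d'$ near $d$, this is the main obstacle. The definition of $h_\infty$ involves perturbed directions $d'$, not just the ray through $d$, so in degenerate situations $h_\infty(d)$ can be negative without $h$ decreasing along the exact ray $td$.
\item I would handle this by adopting the reading implicit in Lemma~\ref{lem:poly-asymptotics}: $h_\infty(d)$ is evaluated through the ray formula, and the three cases listed there are exhaustive, with $\mu(d)\ge 1$. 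If needed, I would state explicitly that the lemma is applied in the regime where the table of Lemma~\ref{lem:poly-asymptotics} determines $h_\infty(d)$.
\end{itemize}

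In the case $\mu:=\mu(d)\ge 1$ with $c:=\phi_\mu(d)<0$, write
\[
h(td)=t^{\mu}\Big(c+\sum_{k=0}^{\mu-1}t^{k-\mu}\phi_k(d)\Big).
\]
By the $o(t^\mu)$ statement of Lemma~\ref{lem:poly-asymptotics}, the bracket tends to $c$. So there is $T>0$ such that the bracket is at most $c/2$ for all $t\ge T$. Then $h(td)\le \tfrac{c}{2}t^{\mu}$ for $t\ge T$. Since $\mu\ge1$ and $c<0$, the right-hand side tends to $-\infty$, which gives $h(td)\to-\infty$. The bound $h(td)\le \tfrac c2 t^\mu<0$ for all $t\ge T$ gives the second claim directly.

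The $\mu=1$ case is the same computation, since here $h(td)=t\phi_1(d)+\phi_0$ with $\phi_1(d)<0$. The threshold can be taken explicitly as $T=\max\{1,\,2|\phi_0|/|\phi_1(d)|\}$, which yields $h(td)\le \tfrac{\phi_1(d)}{2}t<0$.
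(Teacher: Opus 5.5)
Your argument is correct and is essentially the paper's. The table in Lemma~\ref{lem:poly-asymptotics} leaves only $\mu(d)\ge 2$ with $\phi_{\mu(d)}(d)<0$, or $\mu(d)=1$ with $\phi_1(d)<0$, and in both cases $t\mapsto h(td)$ is a polynomial with negative leading coefficient; factoring out $t^{\mu}$ just handles the two cases at once.

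Your caveat is also well founded. The paper's proof relies on the same ray-wise reading $h_\infty(d)=\lim_{t\to\infty}h(td)/t$. Under that reading the constant-ray case gives $h_\infty(d)=0$, so it is excluded outright rather than assumed away. With the literal $\liminf$ over $d'\to d$, the lemma fails: take $h(x)=-x_2^2$, $d=(1,0)$ and $d'=(1,t^{-1/4})$. Then $h_\infty(d)=-\infty$, while $h(td)\equiv 0$.
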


\begin{proof}
Write $h(x) = \sum_{k=0}^p \phi_k(x)$ with each $\phi_k$ homogeneous of degree $k$ and let
\[
\mu(d) := \max\{k : \phi_k(d)\neq 0\}.
\]
By Lemma~\ref{lem:poly-asymptotics} the condition $h_\infty(d)<0$ can only occur in two cases:
\begin{equation}\label{eq:mu-cases}
\mu(d)\ge 2 \ \text{and}\ \phi_{\mu(d)}(d)<0
\qquad\text{or}\qquad
\mu(d)=1 \ \text{and}\ \phi_1(d)<0.
\end{equation}

We treat the two cases in \eqref{eq:mu-cases} separately.

\smallskip
\noindent\emph{Case 1: $\mu(d)\ge 2$ and $\phi_{\mu(d)}(d)<0$.}
By Lemma~\ref{lem:poly-asymptotics},
\[
h(td) = t^{\mu(d)}\phi_{\mu(d)}(d) + \sum_{k=0}^{\mu(d)-1} t^k \phi_k(d),
\]
so $t\mapsto h(td)$ is a univariate polynomial of degree $\mu(d)\ge 2$ with negative leading coefficient $\phi_{\mu(d)}(d)<0$. Hence $h(td)\to -\infty$ as $t\to\infty$, and there exists $T_1>0$ such that $h(td)<0$ for all $t\ge T_1$.

\smallskip
\noindent\emph{Case 2: $\mu(d)=1$ and $\phi_1(d)<0$.}
In this case $\phi_k(d)=0$ for all $k\ge 2$, so along the ray
\[
h(td) = t\phi_1(d) + \phi_0(d),
\]
an affine function of $t$ with negative slope $\phi_1(d)<0$. Thus $h(td)\to -\infty$ as $t\to\infty$, and there exists $T_2>0$ such that $h(td)<0$ for all $t\ge T_2$.

\smallskip
Setting $T := \max\{T_1,T_2\}$ gives
\[
h(td)\to -\infty \text{ as } t\to \infty
\quad\text{and}\quad
h(td)<0 \ \ \forall\, t\ge T.
\]
\end{proof}

From these two lemmas we prove the main result of this paper which underlies the certificate presented in the following section. 

\begin{theorem}[Directional certificate for unboundedness]\label{thm:directional-certificate}
Let $f, g_1,\ldots,g_m : \mathbb{R}^n \to \mathbb{R}$ be polynomials and define
\[
X := \{x \in \mathbb{R}^n : g_i(x) \le 0,\ i \in [m] \} \quad \text{as in}~\eqref{POP}.
\]
Suppose there exists a direction $d \in \mathbb{R}^n \setminus \{0\}$ such that
\begin{equation}\label{key-condition}
f_\infty(d) < 0,
\qquad
(g_i)_\infty(d) < 0 \quad \text{for all } i \in [m].
\end{equation}
Then the polynomial optimization problem
\[
\inf_{x\in X} f(x)
\]
is unbounded below; that is,
\[
\inf_{x\in X} f(x) = -\infty.
\]
\end{theorem}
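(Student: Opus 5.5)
The plan is to show directly that the ray $\{td : t \ge T\}$ eventually lies in $X$ and that $f$ diverges to $-\infty$ along it. This avoids any appeal to the asymptotic cone $X_\infty$ or to the general implication \eqref{logical-implication}. The argument rests on applying Lemma~\ref{lem:eventual-feasibility} separately to each of the finitely many polynomials $f, g_1,\ldots,g_m$.

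\emph{Step 1 (feasibility along the ray).} For each $i\in[m]$, the hypothesis $(g_i)_\infty(d)<0$ in \eqref{key-condition} lets us invoke Lemma~\ref{lem:eventual-feasibility} with $h=g_i$. This yields $T_i>0$ such that $g_i(td)<0$ for all $t\ge T_i$. Set $T_X:=\max_{i\in[m]} T_i$, which is finite because $m$ is finite. Then $g_i(td)<0\le 0$ for every $i$ and every $t\ge T_X$, so $td\in X$ for all $t\ge T_X$. If $m=0$, then $X=\mathbb{R}^n$ and this step is vacuous.

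\emph{Step 2 (divergence of the objective).} Apply Lemma~\ref{lem:eventual-feasibility} with $h=f$, using $f_\infty(d)<0$, to get $f(td)\to-\infty$ as $t\to\infty$. Now fix any $M\in\mathbb{R}$. Choose $t_M$ such that $f(td)<M$ for all $t\ge t_M$, and let $t^\ast:=\max\{t_M,T_X\}$. Then $x^\ast:=t^\ast d\in X$ and $f(x^\ast)<M$. Since $M$ was arbitrary, $\inf_{x\in X} f(x)=-\infty$. This also shows that $X$ is unbounded, consistent with the paper's convention on the meaning of `unboundedness'.

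The argument is essentially bookkeeping once Lemma~\ref{lem:eventual-feasibility} is in hand. The one point needing care is that the lemma is stated for a fixed ray $t\mapsto td$ rather than for perturbed directions $d'\to d$. The hypotheses here are also about that single direction $d$, so the fixed-ray version is exactly what is needed.

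I would also remark that the strict inequalities $(g_i)_\infty(d)<0$ are what guarantee feasibility. The weaker condition $d\in X_\infty$ would not by itself place the ray $td$ in $X$. That is the sense in which \eqref{key-condition} replaces the hypothesis of \eqref{logical-implication} with a condition checkable pointwise along $d$.
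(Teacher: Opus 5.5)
Your proof is correct given Lemma~\ref{lem:eventual-feasibility} and is essentially the paper's own argument: apply that lemma to each $g_i$ and to $f$, take the maximum of the finitely many thresholds, and conclude along the feasible tail $\{td : t\ge T\}$ of the ray. One caveat concerns your closing remarks. The real subtlety is not a mismatch between the lemma and the theorem; rather, Lemma~\ref{lem:eventual-feasibility} (and hence both your proof and the paper's) holds only when $h_\infty(d)$ is read as the radial quantity computed in Lemma~\ref{lem:poly-asymptotics}, because under the notation section's liminf definition over $d'\to d$ one can have $h_\infty(d)<0$ while $h(td)>0$ for all $t>0$ (e.g.\ $h(x)=x_2-x_1^2$, $d=(0,1)$).
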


\begin{proof}
Fix such a direction $d$. By Lemma~\ref{lem:eventual-feasibility}, the condition $(g_i)_\infty(d) < 0$
ensures the existence of $T_i > 0$ such that 
\[
g_i(td) < 0 \qquad \forall\, t \ge T_i.
\]
Hence the ray $\{td : t \ge T\}$, where $T := \max_i T_i$, lies in the feasible set $X$.

Applying Lemma~\ref{lem:eventual-feasibility} to $f$ as well yields a $T_f > 0$ such that
\[
f(td) < 0 \qquad \forall\, t \ge T_f,
\qquad\text{and}\qquad 
f(td) \to -\infty \ \text{as } t \to \infty.
\]
Let $T' := \max\{T, T_f\}$. For all $t \ge T'$, the point $td$ is feasible and satisfies $f(td) < 0$, and along this feasible ray we have $f(td) \to -\infty$. Therefore,
\[
\inf_{x\in X} f(x) \le f(td) \to -\infty,
\]
which proves that the problem is unbounded below.
\end{proof}

Note that for a generic (non-polynomial) optimization problem the condition 
\[ \exists d\neq 0 \quad \text{such that}~ f_\infty(d) <0 \text{ and } d \in X_\infty\] is sufficient to prove unboundedness from below. However determining the membership $d \in X_\infty$
is not trivial and hence the asymptotic feasibility condition in \eqref{key-condition} 
\[ (g_i)_\infty(d) < 0 \quad \text{for } i \in [m] \] allows us to bypass this. We conclude this section with a remark on the case of directions with zero asymptotic growth. 

\begin{remark}[Directions of Zero Asymptotic Growth]
Note that by Lemma \ref{lem:poly-asymptotics}, the case $f_\infty(d)=0$ can occur only when
all homogeneous components of degree $\ge 2$ vanish at $d$, and the linear term
satisfies $\phi_1(d)=0$. Thus the set $\{d : f_\infty(d)=0\}$ is exactly the 
zero set of these finitely many homogeneous components.
No additional geometric behavior arises: these 
directions are simply ``flat'' at infinity and are not informative in determining whether or not \eqref{POP} is unbounded from below. 
\end{remark}

In the next section we turn Theorem \ref{thm:directional-certificate} into a practically implementable algorithm. 

\section{Certificate Procedure}

In the previous section we prove Theorem \ref{thm:directional-certificate} which gives an analytical criterion for detecting unboundedness. In this section we present an implementation of this idea which is based on directional sampling. That is, for a polynomial objective $f$ and constraint polynomials $\{g_i\}$ as in \eqref{POP}, if we are able to find a direction $d \in \ren \backslash \{0\}$ which satisfies \eqref{key-condition} then we can certify that \eqref{POP} is \textit{not} bounded from below. Our Lemma \ref{lem:poly-asymptotics} gives us a way to compute $f_\infty$ and $(g_i)_\infty$ for all $i \in [m]$ but we need one more conceptual bridge to be able to fully realize this certificate. 

\begin{remark}[Normalization of certificate directions]\label{rem:normalization}
The certificate in Theorem~\ref{thm:directional-certificate} depends only on the 
direction of $d \in \ren \setminus \{0\}$ and not on its magnitude. 
Let $d \neq 0$ and define the normalized direction $\hat d := d / \|d\|$. 
Fix a polynomial $h$ and write $h(x) = \sum_{k=0}^p \phi_k(x)$ with each $\phi_k$ 
homogeneous of degree $k$, as in Lemma~\ref{lem:poly-asymptotics}. For this $h$ and
direction $d$, define
\[
\mu(d) := \max\{k : \phi_k(d)\neq 0\}.
\]
Since each $\phi_k$ is homogeneous of degree $k$, we have
\[
\phi_k(\hat d)
=
\phi_k\!\left(\frac{d}{\|d\|}\right)
=
\frac{1}{\|d\|^k}\,\phi_k(d),
\]
so in particular $\phi_k(\hat d)=0$ if and only if $\phi_k(d)=0$. Hence
$\mu(\hat d)=\mu(d)$ and
\[
\phi_{\mu(d)}(\hat d)
=
\frac{1}{\|d\|^{\mu(d)}}\,\phi_{\mu(d)}(d),
\]
so $\phi_{\mu(d)}(\hat d)$ and $\phi_{\mu(d)}(d)$ have the same sign. One can now apply Lemma~\ref{lem:poly-asymptotics} and perform a simple case-argument to see that the signs of $h_\infty(d)$ and $h_\infty(\hat d)$ coincide. This justifies restricting the
algorithmic search to the unit sphere.
\end{remark}

Now given polynomials $f, g_1, \ldots, g_m : \ren \to \re$ we propose the following certification procedure. 

\begin{algorithm}[H]\label{alg:main}
\caption{Directional Certificate of Unboundedness for Polynomial Optimization Problems}
\vspace{0.5em}
\DontPrintSemicolon

\KwIn{
    Objective polynomial $f$; constraint polynomials $g_1,\dots,g_m$; sampling budget $N$; 
}
\KwOut{
    Either ``\textsc{unbounded}'' or ``\textsc{inconclusive}''.
}

\BlankLine

\textbf{Step 0.} 
Sample a set of $N$ directions according to some distribution from the sphere $\mathbb{S}^{n-1}$ to form a set $\{d_1, d_2, \ldots, d_N\} := \mathcal{D}.$ \\
\textbf{Step 1. }
Decompose each polynomial $h \in \{f,g_1,\dots,g_m\}$ into homogeneous parts
$h = \sum_{k} \phi_{h,k}$ 
where $k_h := \max\{k : \phi_{h,k}(d) \neq 0\}$. \\ 
\textbf{Step 2.}
\ForEach{$d \in \mathcal{D}$}{
    \textbf{2a. Check asymptotic feasibility.}
    The direction $d$ is asymptotically feasible if
    \[
        (g_i)_\infty(d) < 0 \qquad \forall\, i = 1,\dots,m.
    \]
    \textbf{2b. Apply the certificate condition.}
    \If{$f_\infty(d) < 0$ and $d$ is asymptotically feasible}{
        \Return{``\textsc{unbounded}''}
    }
}

\BlankLine
\Return{``\textsc{inconclusive}''}
\end{algorithm}

\bigskip 

Algorithm~\ref{alg:main} is, by design, a one-sided certificate for
unboundedness. Whenever a sampled direction $d \in \mathcal{D}$ satisfies
\[
    f_\infty(d) < 0
    \qquad\text{and}\qquad 
    (g_i)_\infty(d) < 0 \quad \forall\, i \in [m],
\]
the algorithm returns \textsc{unbounded}. Theorem~\ref{thm:directional-certificate}
serves as a proof of correctness for this output: such a direction certifies the
existence of a feasible ray along which $f$ diverges to $-\infty$, and therefore
$\inf_{x \in X} f(x) = -\infty$.

At the same time, the certificate is inherently incomplete. If no violating
direction is found in the finite sample set $\mathcal{D}$, the algorithm returns
\textsc{inconclusive}. This outcome does \emph{not} assert that the problem is
bounded from below. The reason is straightforward: the algorithm inspects only a
finite subset $\mathcal{D}\subset \mathbb{S}^{n-1}$, and the absence of a certifying
direction in $\mathcal{D}$ does not rule out the possibility that some unsampled direction
$d^\star \in \mathbb{S}^{n-1}$ satisfies the conditions \eqref{key-condition}. In the subsections that follow we discuss sampling considerations, complexity and develop a probabilistic interpretation of an inconclusive outcome.

\subsection{Sampling and Complexity}
\label{subsec:complexity-sampling}

The performance of Algorithm~\ref{alg:main} depends on two
components: the computational cost of evaluating the quantities
$f_\infty(d)$ and $(g_i)_\infty(d)$, and the strategy used to construct the
finite sampling set $\mathcal{D} \subset \mathbb{S}^{n-1}$. We discuss each in turn.

\paragraph{Computational complexity.}
For each polynomial $h \in \{f, g_1,\dots,g_m\}$ let $M_h$ denote the number
of component functions ($\phi_i)$ appearing in $h$. Grouping the components of $h$
by total degree to form its homogeneous components requires a single pass
over the list of $\phi_i$'s and therefore costs $O(M_h)$ operations. This need only be done once ideally prior to sampling. Once this
decomposition is available, evaluating  $h_\infty(d)$ for
a given direction $d \in \mathbb{S}^{n-1}$ requires evaluating the homogeneous
components of $h$ in descending order of degree until a nonzero value is
encountered. In the worst case this touches all $M_h$ components of $h$ and thus costs
$O(M_h)$ arithmetic operations. 

Consequently, the cost of evaluating the certificate conditions along one
direction $d$ is
\[
O\!\left(M_f + \sum_{i=1}^m M_{g_i}\right),
\]
and running Algorithm~\ref{alg:main} on a sampled set $\mathcal{D}$ of size $|\mathcal{D}| = N$
requires
\[
O\!\left(
N\left( M_f + \sum_{i=1}^m M_{g_i} \right)
\right)
\]
operations.
Thus, the total runtime of the certificate scales linearly with the
number of sampled directions. 

\paragraph{Sample Complexity}
The certificate relies on generating a finite set
of directions $\mathcal{D} \subset \mathbb{S}^{n-1}$. Since correctness of
Algorithm~\ref{alg:main} does not depend on how $\mathcal{D}$ is chosen,
sampling affects only the likelihood of detecting a certifying direction when
one exists. The most natural strategy is to simply select each
$d \in \mathcal{D}$ independently from the uniform distribution on
$\mathbb{S}^{n-1}$; this choice of distribution is assumed to hold for the remainder of this paper. 
As sampling $N$ directions costs
$O(N)$ operations, whereas evaluating the asymptotic function at each direction requires
$O\!\left(N\cdot(M_f + \sum_{i=1}^m M_{g_i})\right)$ operations, the
overall runtime is dominated by polynomial evaluation.

\subsection{Statistical Guarantees under Uniform Random Sampling}\label{subsec:statistical}

In this section we upper bound the probability of the \textsc{Inconclusive} case. Define the set of \emph{certifying directions}
\[
\mathcal{D}^\star = \{ d \in \mathbb{S}^{n-1} : (g_i)_\infty < 0 \;\ \forall i \in [m]^+\}
\]
where $[m]^+ = \{0,1,\ldots, m\}$, the objective $f := g_0$ and the constraints $g_i$ for all $i \in [m]$ are as given by \eqref{POP}.  
Let
\[
    \alpha := \sigma(\mathcal{D}^\star)
\]
where $\sigma$ is the normalized Hausdorff measure
on $\mathbb{S}^{n-1}$ and is, by construction, a probability measure. As shown in Theorem~\ref{thm:directional-certificate},
non-emptiness of $\mathcal{D}^\star$ certifies that the problem is unbounded below.

Suppose now that Algorithm~\ref{alg:main} is run with
$\mathcal{D} = \{d_1,\dots,d_N\}$, where each $d_j \in \mathcal{D}$ is sampled i.i.d. uniformly from $\mathbb{S}^{n-1}$. For each $j$, define the indicator variable
\[
    X_j := \mathbbm{1}_{\{d_j \in \mathcal{D}^\star\}}.
\]
It follows that $\mathbb{P}(X_j = 1) = \alpha$ by definition of $\sigma$. The algorithm returns
\textsc{unbounded} if at least one $X_j = 1$, and returns \textsc{inconclusive}
when $X_1 = \cdots = X_N = 0$. Given $\alpha$, the probability of an inconclusive
outcome is therefore
\[
    \mathbb{P}(\textsc{inconclusive})
    = (1 - \alpha)^N.
\]
For any fixed $\alpha >0$ and confidence level $\delta \in (0,1)$ we can take 
\[
 N \ge \frac{\log \delta}{\log (1-\alpha)}
\]
to ensure that 
\[
(1-\alpha)^N \leq \delta.
\]
Thus, whenever the set of certifying directions occupies a positive-measure subset of $\mathbb{S}^{n-1}$, unboundedness is detected almost surely as $N \to \infty$. 

\section{Measure Zero Sets of Certifying Directions}

In Subsection \ref{subsec:statistical} we introduced the set of certifying directions 

\[
\mathcal{D}^\star = \{ d \in \mathbb{S}^{n-1} : (g_i)_\infty < 0 \;\ \forall i \in [m]^+\}
\]

letting $\alpha := \sigma(\mathcal{D}^\star)$ where $\sigma$ is again the normalized Hausdorff measure on $\mathbb{S}^{n-1}$. We refer to $\sigma$ as simply ``measure" henceforth. 
If $\mathcal{D}^\star$ is nonempty and $\alpha =0$ our certificate will be unable to sample a direction on $\mathbb{S}^{n-1}$ by which to assert unboundedness of the problem even though the problem is indeed unbounded. In this section we discuss how and why the measure zero case can occur with respect to \eqref{POP}. We begin by formalizing the notion of neighborhoods on the sphere. 

\begin{definition} (cf. pg 87 \cite{Munkres2000})
A set $U\subset\mathbb{S}^{n-1}$ is said to be relatively open (with respect to $\mathbb{S}^{n-1}$) if there exists an
open set $\widetilde U\subset\mathbb{R}^n$ such that
\[
U=\widetilde U\cap\mathbb{S}^{n-1}.
\]
\end{definition}
The above definition allows us to generalize the notion of an ``arc" or ``circular cap" on $\mathbb{S}^2$ or $\mathbb{S}^3$ respectively. Recall that each polynomial $g_i$ of degree $p_i$ admits the decomposition 
\begin{equation}\label{constraint_decomp}
g_i(x) = \sum_{k=1}^{p_i} \phi_{i,k}(x),
\end{equation}
where each $\phi_{i,k}$ is a homogeneous polynomial of degree $k$ and therefore the max degree term of $g_i$ is denoted $\phi_{i,p_i}$ for all $i \in [m]^+$. 

\begin{theorem}\label{thm: pos_measure}
Let $\mathcal{D}^\star$ be nonempty. If there exists $d_0 \in \mathcal{D}^\star$ such that $\phi_{i,p_i}(d_0) < 0$ for all $i \in [m]^+$, where $p_i:=\deg(g_i)$, then $\sigma(\mathcal{D}^\star)>0$.
\end{theorem}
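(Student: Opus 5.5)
The plan is to exhibit a relatively open subset of $\mathbb{S}^{n-1}$ that contains $d_0$ and lies inside $\mathcal{D}^\star$, and then to argue that nonempty relatively open subsets of the sphere have positive normalized measure. The natural candidate is
\[
U := \{ d \in \mathbb{S}^{n-1} : \phi_{i,p_i}(d) < 0 \ \ \forall i \in [m]^+ \}.
\]
By hypothesis $d_0 \in U$, so $U$ is nonempty.

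\textbf{Step 1: $U$ is relatively open.} Each $\phi_{i,p_i}$ is a polynomial, hence continuous on $\ren$. Therefore $\widetilde U := \bigcap_{i=0}^m \phi_{i,p_i}^{-1}((-\infty,0))$ is a finite intersection of open sets, hence open in $\ren$. Since $U = \widetilde U \cap \mathbb{S}^{n-1}$, the set $U$ is relatively open in the sense of the definition above.

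\textbf{Step 2: $U \subseteq \mathcal{D}^\star$.} First I would note that every $p_i \ge 1$. If some $g_i$ were constant, then $(g_i)_\infty(d) = \liminf c/t = 0$ for every $d$. This would contradict $d_0 \in \mathcal{D}^\star$, which requires $(g_i)_\infty(d_0) < 0$.

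Now fix $d \in U$ and $i \in [m]^+$. Since $\phi_{i,p_i}(d) \neq 0$ and $p_i$ is the top degree, we get $\mu(d) = p_i$ in the notation of Lemma~\ref{lem:poly-asymptotics}. There are two cases.
\begin{itemize}
\item If $p_i \ge 2$, the lemma gives $(g_i)_\infty(d) = -\infty < 0$, because $\phi_{i,p_i}(d) < 0$.
\item If $p_i = 1$, the lemma gives $(g_i)_\infty(d) = \phi_{i,1}(d) < 0$.
\end{itemize}
Hence every $d \in U$ satisfies the defining condition of $\mathcal{D}^\star$. By monotonicity of $\sigma$, this yields $\sigma(\mathcal{D}^\star) \ge \sigma(U)$.

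\textbf{Step 3: $\sigma(U) > 0$.} Since $U$ is relatively open and contains $d_0$, it contains a spherical cap $C_\varepsilon(d_0) = \{ d \in \mathbb{S}^{n-1} : \|d - d_0\| < \varepsilon \}$ for some $\varepsilon > 0$. It therefore suffices to show that any such cap has positive measure.

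I would use rotation invariance of $\sigma$. The rotated caps $\{ R\,C_\varepsilon(d_0) : R \in SO(n) \}$ cover the sphere, because $SO(n)$ acts transitively on $\mathbb{S}^{n-1}$. By compactness, finitely many of them, say $K$, already cover $\mathbb{S}^{n-1}$. Each rotated cap has the same measure as $C_\varepsilon(d_0)$, so
\[
1 = \sigma(\mathbb{S}^{n-1}) \le K\, \sigma(C_\varepsilon(d_0)),
\]
which gives $\sigma(C_\varepsilon(d_0)) \ge 1/K > 0$. Alternatively, one could compute the cap's area directly in spherical coordinates.

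\textbf{Main obstacle.} I expect the only delicate points to be the degree-zero edge case in Step 2 and making the positivity of the measure of open sets rigorous in Step 3. The covering argument handles the latter cleanly. The rest of the proof is continuity together with Lemma~\ref{lem:poly-asymptotics}.
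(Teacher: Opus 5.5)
Your proposal is correct and follows the same route as the paper's proof: continuity of the leading forms $\phi_{i,p_i}$ gives a relatively open neighborhood $U$ of $d_0$ on which all of them are negative, Lemma~\ref{lem:poly-asymptotics} then places $U$ inside $\mathcal{D}^\star$, and a nonempty relatively open subset of $\mathbb{S}^{n-1}$ has positive measure. Your version is somewhat more careful than the paper's in three places: the paper writes $(g_i)_\infty(d)=-\infty$ even when $p_i=1$ (where the value is $\phi_{i,1}(d)$), whereas you split off the linear case; you also exclude constant $g_i$; and where the paper simply asserts that $\sigma$ is positive on relatively open sets, you justify this via rotation invariance and compactness (allowing reflections as well as rotations there also covers the degenerate case $n=1$).
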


\begin{proof}
    Note that each $\phi_{i,k}$ in \eqref{constraint_decomp} is continuous on
    $\mathbb{R}^n$, and therefore its restriction to $\mathbb{S}^{n-1}$ is continuous.
    Since $d_0 \in \mathcal{D}^\star$, we have
    \[
        (g_i)_\infty(d_0) < 0 \qquad \text{for all } i \in [m]^+.
    \]
    Together with the assumption $\phi_{i,p_i}(d_0) < 0$, Lemma~3.1 implies that
    $\mu_i(d_0) = p_i$. By continuity of $\phi_{i,p_i}$ on $\mathbb{R}^n$, for each
    $i \in [m]^+$ there exists an open set $\widetilde U_i \subseteq \mathbb{R}^n$
    with $d_0 \in \widetilde U_i$ such that
    \[
        \phi_{i,p_i}(d) < 0 \qquad \text{for all } d \in \widetilde U_i.
    \]
    Let $\widetilde U := \bigcap_{i=0}^m \widetilde U_i$ so that $\widetilde U$ is open in $\ren$. Further define
    \[
        U_i := \widetilde U_i \cap \mathbb{S}^{n-1}
    \]
    and $U := \bigcap_{i=0}^m U_i$ giving that
    \[
        U = \widetilde U \cap \mathbb{S}^{n-1}
    \]
    i.e., $U$ is relatively open. 
    For every $d \in U$ we have $\phi_{i,p_i}(d) < 0$ for all $i \in [m]^+$. In particular,
    $\mu_i(d)=p_i$ for all $i$ and all $d \in U$. Applying Lemma~3.1 once more, it follows that
    \[
        (g_i)_\infty(d) = -\infty < 0 \qquad \text{for all } d \in U \text{ and all } i \in [m]^+,
    \]
    and hence $U \subseteq \mathcal{D}^\star$. Finally, by the relative openness of $U$ there exists a nontrivial neighborhood of $d_0$ in $\mathbb{S}^{n-1}$, namely $\widetilde U$, implying that $U$ has positive measure. Therefore $\sigma(\mathcal{D}^\star) > 0$.
\end{proof}

The above result says that for the problem \eqref{POP} the certifying strict inequality $\phi_{i,k}(d) < 0$ must hold at the highest degree term $\phi_{i,p_i}$ for each constraint polynomial on a particular direction. Existence of such a direction allows one to claim that there exist directions in its immediate vicinity that also satisfy the certifying strict inequality thereby showing the existence of a relatively open subset of $\mathcal{D}^\star$. This gives us a natural condition by which to characterize the $\sigma(\mathcal{D}^\star)=0$ case. We first state some auxiliary lemmas towards the characterization result.

\begin{lemma}\label{lem:analytic_pullback}
Let $\phi:\mathbb{R}^n\to\mathbb{R}$ be a nonzero homogeneous polynomial and
$d^\ast\in\mathbb{S}^{n-1}$. Then there exist an open set
$V\subset\mathbb{R}^{n-1}$, a relatively open neighborhood
$U\subset\mathbb{S}^{n-1}$ of $d^\ast$, and a real-analytic map
$\Psi:V\to U$ such that $\phi\circ\Psi$ is real analytic on $V$,
not identically zero, and
\[
E:=\{u\in V:\phi(\Psi(u))=0\}
\]
has Lebesgue measure zero in $\mathbb{R}^{n-1}$.
\end{lemma}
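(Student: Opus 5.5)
The plan is to build $\Psi$ as a local inverse of a coordinate chart of the sphere. After that, two standard facts about real-analytic functions finish the argument: a nonzero real-analytic function on a connected open set has a zero set of Lebesgue measure zero, and a nonzero polynomial cannot vanish on a nonempty open subset of $\mathbb{S}^{n-1}$.

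\textbf{Step 1 (chart).} Since $d^\ast\in\mathbb{S}^{n-1}$, some coordinate of $d^\ast$ is nonzero. After a permutation of coordinates, and possibly a sign change, we may assume $d^\ast_n>0$. Let $B:=\{u\in\mathbb{R}^{n-1}:\|u\|<1\}$ and define
\[
\Psi(u):=\bigl(u,\sqrt{1-\|u\|^2}\bigr), \qquad u\in B .
\]
This map is real analytic on $B$, because $s\mapsto\sqrt{1-s}$ is analytic for $s<1$. It is a homeomorphism of $B$ onto the open hemisphere $H:=\{x\in\mathbb{S}^{n-1}:x_n>0\}=\{x_n>0\}\cap\mathbb{S}^{n-1}$, which is relatively open. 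Take $V:=B$ and $U:=H$; then $d^\ast\in U$. Since $\phi\circ\Psi$ is a composition of a polynomial with analytic functions, it is real analytic on $V$.

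\textbf{Step 2 ($\phi\circ\Psi\not\equiv0$).} Suppose, for contradiction, that $\phi\circ\Psi\equiv0$ on $V$. Then $\phi$ vanishes on the hemisphere $H$. By homogeneity, $\phi(tx)=t^k\phi(x)$, so $\phi$ also vanishes on the open cone $\{tx: t>0,\ x\in H\}=\{x\in\mathbb{R}^n: x_n>0\}$. This is a nonempty open subset of $\mathbb{R}^n$. A polynomial that vanishes on a nonempty open set is the zero polynomial, since all its Taylor coefficients at an interior point vanish. This contradicts $\phi\neq0$.

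\textbf{Step 3 (measure zero).} The set $V$ is a ball, hence connected, and $\phi\circ\Psi$ is real analytic on $V$ and not identically zero. So its zero set $E$ has Lebesgue measure zero in $\mathbb{R}^{n-1}$. I would cite this standard fact; for example, see Mityagin's note on zero sets of real-analytic functions. It can also be proved by induction on dimension using Fubini: the points where all derivatives vanish form an open and closed set, hence are empty by connectedness. Near any point of $E$, some partial derivative of some order is nonzero while all lower ones vanish. Then, after a linear change of variables, a one-variable slice has isolated zeros, and Fubini gives measure zero locally. A countable cover then finishes the argument.

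The main obstacle is Step 3, specifically justifying the measure-zero property of analytic zero sets. I plan to invoke it as a known result rather than reprove it. Steps 1 and 2 are routine. For $n=1$ the statement degenerates ($\mathbb{R}^0$ is a point), and I would treat it trivially or assume $n\ge2$.
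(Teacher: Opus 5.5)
Your proposal is correct and follows the paper's own proof: a local graph chart of $\mathbb{S}^{n-1}$ around $d^\ast$, homogeneity to extend vanishing of $\phi$ from an open patch of the sphere to an open cone in $\mathbb{R}^n$, and Mityagin's theorem for the zero set of $\phi\circ\Psi$. The differences are small improvements rather than a new route. You write the chart explicitly as $u\mapsto(u,\sqrt{1-\|u\|^2})$ on a ball, after a signed permutation, instead of invoking the analytic implicit function theorem. You also state that $V$ is connected, which Mityagin's theorem needs and the paper leaves implicit.
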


\begin{proof}
The unit sphere can be written as
\[
\mathbb{S}^{n-1}=\{x\in\mathbb{R}^n : F(x)=0\},
\qquad
F(x):=\|x\|^2-1,
\]
where $F$ is real analytic and $\nabla F(x)=2x\neq 0$ for all $x\in\mathbb{S}^{n-1}$.
By the real-analytic implicit function theorem \cite[Thm.~1.8.3]{KrantzParks1992},
there exist an open set $V\subset\mathbb{R}^{n-1}$ and a real-analytic function
$\varphi: \ren \to\mathbb{R}$ such that
\[
U:=\{(u,\varphi(u)) : u\in V\}\subset\mathbb{S}^{n-1}
\]
is a relatively open neighborhood of $d^\ast$.
Define $\Psi:V\to U$ by $\Psi(u):=(u,\varphi(u))$.
If $\phi\circ\Psi\equiv 0$ on $V$, then $\phi$ vanishes on $U$ and, by homogeneity, on
\[
\{td : d\in U,\ t\in(1-\varepsilon,1+\varepsilon)\}
\]
for some $\varepsilon>0$. This set is a nonempty open subset of $\mathbb{R}^n$,
which forces $\phi\equiv 0$, a contradiction. Hence $\phi\circ\Psi\not\equiv 0$.
Since $\phi\circ\Psi$ is a nontrivial real-analytic function on $V$, its zero set
\[
E=\{u\in V : \phi(\Psi(u))=0\}
\]
has Lebesgue measure zero by Mityagin’s theorem \cite{Mityagin2015}.
\end{proof}

\begin{lemma}\label{lem:null_transport}
Let $V\subset\mathbb{R}^{n-1}$ be open, let $U\subset\mathbb{S}^{n-1}$ be
relatively open, and let $\Psi:V\to U$ be $C^1$.
If $E\subset V$ has Lebesgue measure zero, then
\[
\sigma(\Psi(E))=0.
\]
\end{lemma}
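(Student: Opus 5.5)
The plan is to use the standard fact that $C^1$ maps between manifolds of the same dimension send Lebesgue-null sets to null sets. I would then pass from Lebesgue measure on $\mathbb{R}^{n-1}$ to $\sigma$ on $\mathbb{S}^{n-1}$ through the definition of the Hausdorff measure. Recall that $\sigma$ is the normalized $(n-1)$-dimensional Hausdorff measure $\mathcal{H}^{n-1}$ restricted to $\mathbb{S}^{n-1}$. It therefore suffices to show $\mathcal{H}^{n-1}(\Psi(E))=0$, since normalization by the finite constant $\mathcal{H}^{n-1}(\mathbb{S}^{n-1})$ preserves null sets.

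\textbf{Step 1 (localize to compact pieces).} I would write $V=\bigcup_{j\ge 1} K_j$ as a countable union of compact cubes $K_j\subset V$. Then $E=\bigcup_j (E\cap K_j)$, and so $\Psi(E)=\bigcup_j \Psi(E\cap K_j)$. By countable subadditivity of $\mathcal{H}^{n-1}$, it is enough to show $\mathcal{H}^{n-1}(\Psi(E\cap K_j))=0$ for each $j$.

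\textbf{Step 2 (local Lipschitz bound).} On a compact convex cube $K_j$, the derivative $D\Psi$ is continuous, so $L_j:=\max_{K_j}\|D\Psi\|<\infty$. By the mean value inequality, $\|\Psi(u)-\Psi(v)\|\le L_j\|u-v\|$ for all $u,v\in K_j$. Hence $\Psi|_{K_j}$ is Lipschitz.

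\textbf{Step 3 (Lipschitz images of null sets).} I would use the standard estimate $\mathcal{H}^{s}(\Psi(A))\le L^{s}\,\mathcal{H}^{s}(A)$ for an $L$-Lipschitz map and any set $A$. The proof goes through covers: fix $\varepsilon>0$ and cover $E\cap K_j$ by sets $C_k$ of small diameter with $\sum_k \omega_{n-1}(\operatorname{diam} C_k/2)^{n-1}<\varepsilon$. This is possible because $\mathcal{H}^{n-1}$ agrees with Lebesgue measure on $\mathbb{R}^{n-1}$, so it vanishes on $E\cap K_j$. The sets $\Psi(C_k\cap K_j)$ then cover $\Psi(E\cap K_j)$, have diameters at most $L_j\operatorname{diam}C_k$, and give the bound $\mathcal{H}^{n-1}_\delta(\Psi(E\cap K_j))\le L_j^{n-1}\varepsilon$. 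Letting $\varepsilon\to 0$ and then $\delta\to 0$ yields $\mathcal{H}^{n-1}(\Psi(E\cap K_j))=0$.

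\textbf{Step 4 (conclude).} Combining Steps 1 and 3 gives $\mathcal{H}^{n-1}(\Psi(E))=0$, and hence $\sigma(\Psi(E))=0$.

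The main obstacle is the measure-theoretic bookkeeping. One must justify that $\sigma$ is the restriction of $\mathcal{H}^{n-1}$, and that $\mathcal{H}^{n-1}$ on $\mathbb{R}^{n-1}$ coincides with Lebesgue measure up to normalization. The fact that $\Psi$ is only locally Lipschitz also has to be handled, which Step 1 does. Note that $\Psi(E)$ need not be Borel, but it is contained in a Borel $\mathcal{H}^{n-1}$-null set, so its $\sigma$-measure is well defined as zero under completion.
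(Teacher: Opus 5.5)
Your proposal is correct and follows essentially the same route as the paper: identify $\sigma$ with normalized $\mathcal{H}^{n-1}$, and use that the $C^1$ (hence locally Lipschitz) map $\Psi$ sends Lebesgue-null sets to $\mathcal{H}^{n-1}$-null sets. The paper cites the area formula for that step, whereas you prove the needed estimate $\mathcal{H}^{n-1}(\Psi(A))\le L^{n-1}\mathcal{H}^{n-1}(A)$ directly and handle explicitly, by exhausting $V$ with compact cubes, the passage from locally to globally Lipschitz that the paper leaves implicit.
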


\begin{proof}
Let $\mathcal{H}^{n-1}$ denote the $(n-1)$-dimensional Hausdorff measure in $\mathbb{R}^n$.
Then the normalized surface measure $\sigma$ on $\mathbb{S}^{n-1}$ is given by
\[
\sigma(A)
:=
\frac{\mathcal{H}^{n-1}(A)}{\mathcal{H}^{n-1}(\mathbb{S}^{n-1})},
\qquad
A\subset\mathbb{S}^{n-1}\ \text{measurable}.
\]
Since $\Psi$ is $C^1$, it is locally Lipschitz. By the area formula
\cite[Thm.~3.8]{EvansGariepy}, the image under $\Psi$ of a Lebesgue measure zero set 
$E\subset\mathbb{R}^{n-1}$ has $\mathcal{H}^{n-1}$-measure zero in $\mathbb{R}^n$.
Therefore $\mathcal{H}^{n-1}(\Psi(E))=0$, and by the definition of $\sigma$,
\[
\sigma(\Psi(E))=0.
\]

\end{proof}

\begin{theorem}\label{thm:measure_zero}
    Suppose that for every direction $d \in \mathcal{D}^\star$ there exists an index $i \in [m]^+$ such that $\phi_{i,p_i}(d) = 0$ where $p_i = \text{deg}(g_i)$. Then 
    \[ \sigma(\mathcal{D}^\star) = 0. \]
\end{theorem}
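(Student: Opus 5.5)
The plan is to show that $\mathcal{D}^\star$ lies inside a finite union of zero sets of nonzero homogeneous polynomials restricted to $\mathbb{S}^{n-1}$, and then that each such zero set is $\sigma$-null. By hypothesis, every $d\in\mathcal{D}^\star$ satisfies $\phi_{i,p_i}(d)=0$ for some $i\in[m]^+$. Hence
\[
\mathcal{D}^\star \subseteq \bigcup_{i=0}^m Z_i,
\qquad Z_i := \{d\in\mathbb{S}^{n-1} : \phi_{i,p_i}(d)=0\}.
\]
Each $\phi_{i,p_i}$ is the top-degree homogeneous component of $g_i$, so it is not identically zero whenever $p_i\ge 1$. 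If some $g_i$ is constant ($p_i=0$), I would note that the membership $(g_i)_\infty(d)<0$ forces $\phi_{i,0}<0$; this is then a nonzero constant, so $Z_i=\emptyset$. Either way, countable subadditivity reduces the claim to showing $\sigma(Z_i)=0$ for a single nonzero homogeneous polynomial $\phi:=\phi_{i,p_i}$. Measurability is not an issue: $Z_i$ is closed, and $\mathcal{D}^\star$ is a subset of a null set, which suffices for the completed measure. I would also remark that $\mathcal{D}^\star$ is Borel, since it is defined by sign conditions on finitely many polynomials in a finite case split.

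To show $\sigma(Z)=0$ for $Z=\{d\in\mathbb{S}^{n-1}:\phi(d)=0\}$, I would localize. For each $d^\ast\in\mathbb{S}^{n-1}$, Lemma~\ref{lem:analytic_pullback} gives an open $V\subset\mathbb{R}^{n-1}$, a relatively open neighborhood $U\ni d^\ast$, and a real-analytic chart $\Psi:V\to U$ such that $E=\{u\in V:\phi(\Psi(u))=0\}$ is Lebesgue-null. I need $\Psi$ to be onto $U$; it is, since $U$ is defined as the graph $\Psi(V)$. Then $Z\cap U=\Psi(E)$, and Lemma~\ref{lem:null_transport} gives $\sigma(Z\cap U)=0$.

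The sets $U$ cover the compact sphere $\mathbb{S}^{n-1}$, so finitely many $U_1,\dots,U_r$ suffice. Then $\sigma(Z)\le\sum_j\sigma(Z\cap U_j)=0$. Combining this over $i\in[m]^+$ yields $\sigma(\mathcal{D}^\star)\le\sum_i\sigma(Z_i)=0$.

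The main obstacle is the local chart step. The graph parametrization $(u,\varphi(u))$ from Lemma~\ref{lem:analytic_pullback} presumes that the last coordinate can be solved for, i.e.\ $d^\ast_n\neq 0$. For a general $d^\ast$ I would first permute coordinates so that a nonzero coordinate of $d^\ast$ is solved for, which is possible because $\nabla F(d^\ast)=2d^\ast\neq0$. This permutation is an isometry, so it preserves $\sigma$ and maps homogeneous polynomials to nonzero homogeneous polynomials, and the lemmas apply verbatim. With that handled, the remaining steps are routine covering and subadditivity arguments.
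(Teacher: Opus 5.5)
Your proposal is correct and follows the paper's proof step for step: cover $\mathcal{D}^\star$ by the zero sets $Z_i$ of the top-degree components, localize with Lemma~\ref{lem:analytic_pullback}, transport the null set with Lemma~\ref{lem:null_transport} using that $\Psi$ maps onto $U$, and finish with a finite subcover and subadditivity. Your coordinate-permutation remark supplies a detail the paper's graph chart $(u,\varphi(u))$ silently assumes ($d^\ast_n\neq 0$); the only slip is the constant case, where actually $(g_i)_\infty\equiv 0$, so the condition $(g_i)_\infty(d)<0$ never holds and $\mathcal{D}^\star=\emptyset$ trivially.
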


\begin{proof}
By assumption,
\[
\mathcal{D}^\star \subseteq \bigcup_{i=0}^m Z_i,
\qquad
Z_i := \{ d \in \mathbb{S}^{n-1} : \phi_{i,p_i}(d)=0\}.
\]
It suffices to show that $\sigma(Z_i)=0$ for each $i\in[m]^+$. Fix $i\in[m]^+$. The polynomial $\phi_{i,p_i}$ is homogeneous, real analytic on $\mathbb{R}^n$,
and not identically zero. Let $d^\ast\in\mathbb{S}^{n-1}$ be arbitrary.
By Lemma~\ref{lem:analytic_pullback}, there exist an open set
$V\subset\mathbb{R}^{n-1}$, a relatively open neighborhood
$U\subset\mathbb{S}^{n-1}$ of $d^\ast$, and a real-analytic map
$\Psi:V\to U$ such that
\[
E:=\{u\in V:\phi_{i,p_i}(\Psi(u))=0\}
\]
has Lebesgue measure zero in $\mathbb{R}^{n-1}$.
By Lemma~\ref{lem:null_transport}, the surjectivity of $\Psi$ implies
\[
\sigma(Z_i\cap U)=\sigma(\Psi(E))=0.
\]
Since $d^\ast$ was arbitrary, we can define a collection of sets $\{U(d)\}_{d \in \mathbb{S}^{n-1}}$ which form an open cover of $\mathbb{S}^{n-1}$, where for each $d$,
$U(d)$ is the neighborhood obtained above. By compactness of $\mathbb{S}^{n-1}$, there exist
$d_1,\dots,d_N \in \mathbb{S}^{n-1}$ such that
\[
\mathbb{S}^{n-1} = \bigcup_{k=1}^N U(d_k).
\]
Set $U_k := U(d_k)$.
Hence
\[
Z_i=\bigcup_{k=1}^N (Z_i\cap U_k),
\]
and
\[
\sigma(\mathcal{D}^\star)
\le \sigma\Bigl(\bigcup_{i=0}^m Z_i\Bigr)
\le \sum_{i=0}^m \sigma(Z_i)\le \sum_{i=0}^m\sum_{k=1}^N \sigma(Z_i\cap U_k)=0,
\]
which proves the claim.
\end{proof}

To summarize the above: 
certification need only depend on existence of a direction $d \in \mathcal{D}^\star$ however for detection via sampling there must exist at least one direction for which there exists an active, or nonzero, component of maximum degree for every polynomial, including the objective. 
Below is a concrete example of a degenerate case for certification. 

\begin{example}
    \begin{equation}
\label{POP:case-study}
\begin{aligned}
\min_{x \in \mathbb{R}^2} \quad
& f(x) := (x_1^2 - x_2^2)^2 - x_2^3 \\
\text{s.t.} \quad
& g_1(x) := (x_1^2 - x_2^2)^2 - x_1^2 x_2^2 \le 0, \\
& g_2(x) := 1 - x_1^2 - x_2^2 \le 0 .
\end{aligned}
\end{equation}

The feasible set is noncompact and is illustrated in Figure~\ref{fig:feasible-set}. 
The constraint $g_2(x)\le 0$ enforces $\|x\|\ge 1$, while $g_1(x)\le 0$ restricts feasibility to conic regions centered about the lines $x_1=\pm x_2$. 
As a result, the feasible region consists of four unbounded conic sectors. 

\begin{figure}[t]
\centering
\begin{tikzpicture}[scale=1.2]

\draw[->] (-2.4,0) -- (2.4,0) node[right] {$x_1$};
\draw[->] (0,-2.4) -- (0,2.4) node[above] {$x_2$};

\draw[dashed] (0,0) circle (1);

\fill[gray!25,opacity=0.6]
  (0.75,0.75) -- (2.4,1.0) -- (1.0,2.4) -- cycle;
\fill[gray!25,opacity=0.6]
  (-0.75,0.75) -- (-2.4,1.0) -- (-1.0,2.4) -- cycle;
\fill[gray!25,opacity=0.6]
  (0.75,-0.75) -- (2.4,-1.0) -- (1.0,-2.4) -- cycle;
\fill[gray!25,opacity=0.6]
  (-0.75,-0.75) -- (-2.4,-1.0) -- (-1.0,-2.4) -- cycle;

\draw[dashed] (-2.4,-2.4) -- (2.4,2.4);
\draw[dashed] (-2.4,2.4) -- (2.4,-2.4);

\node[anchor=west] at (1.85,1.8) {$x_1 = x_2$};
\node[anchor=west] at (1.85,-1.8) {$x_1 = -x_2$};
\node[anchor=east] at (-1.15,0.3) {$\|x\| = 1$};

\end{tikzpicture}
\caption{Feasible set of problem \eqref{POP:case-study}. 
The constraint $g_2(x)\le 0$ enforces $\|x\|\ge 1$, while 
$g_1(x)\le 0$ restricts feasibility to conic sectors centered about the reference directions $x_1=\pm x_2$ (dashed lines). 
Among these sectors, those with $x_2>0$ correspond to asymptotic descent directions satisfying $f_\infty(d)=-\infty$.}
\label{fig:feasible-set}
\end{figure}
We now certify that \eqref{POP:case-study} is unbounded from below using Theorem~\ref{thm:directional-certificate}. 
First observe that $g_1$ is homogeneous of degree four. 
Evaluating its leading term along the direction $d=(1,1)$ gives
\[
g_1(d) = (1-1)^2 - 1 = -1 < 0,
\]
and hence, by Lemma~\ref{lem:poly-asymptotics},
\[
(g_1)_\infty(d) = -\infty.
\]
Similarly, $g_2(x)=1-x_1^2-x_2^2$ has leading homogeneous term $-(x_1^2+x_2^2)$, so
\[
(g_2)_\infty(d) = -\infty
\qquad \text{for all } d\neq 0.
\]
Thus $d=(1,1)$ is an asymptotically feasible direction satisfying the conditions of Theorem~\ref{thm:directional-certificate}.
Next, decompose the objective into homogeneous components,
\[
f(x)=\phi_4(x)+\phi_3(x),
\qquad
\phi_4(x)=(x_1^2-x_2^2)^2,\quad \phi_3(x)=-x_2^3.
\]
For $d=(1,1)$ we have $\phi_4(d)=0$ and $\phi_3(d)=-1<0$, and therefore, by Lemma~\ref{lem:poly-asymptotics},
\[
f_\infty(d)=-\infty.
\]
Since $f_\infty(d)<0$ and $(g_i)_\infty(d)<0$ for $i=1,2$, the hypotheses of Theorem~\ref{thm:directional-certificate} are satisfied and \eqref{POP:case-study} is unbounded however $\sigma(\mathcal{D}^\star) = 0.$ 
\end{example}

\section{Implications for Global Polynomial Optimization Solvers}

Modern global solvers for polynomial optimization reason about unboundedness \textit{during} global search, not prior. Sum-of-Squares and moment-based hierarchies check unboundedness by solving a sequence of increasingly large but finite relaxations, inferring unboundedness only indirectly when lower bounds fail to stabilize. Similarly, branch-and-bound and branch-and-reduce algorithms reason over bounded regions of the feasible set, either by construction or through dynamically generated bounding boxes, and may terminate only after extensive global exploration or numerical heuristics suggest divergence. The limitation of these methods is summarized in the following remark. 

\begin{remark}[Finite-region indistinguishability]
\label{rem:finite_truncation}
There exist polynomial optimization problems for which
\[
\inf_{x \in X} f(x) = -\infty,
\]
but such that for every radius $R > 0$ the truncated problem
\[
\inf \{ f(x) : x \in X,\ \|x\| \le R \}
\]
admits a finite optimal value. In this situation, unboundedness is never witnessed on any bounded subset of the feasible region. 
\end{remark}

The certification's approach of explicitly testing asymptotic feasibility and asymptotic descent along directions on the unit sphere reasons directly about behavior at infinity, rather than inferring it indirectly from bounded approximations. When certifying directions occupy a set of positive measure the certificate exposes a concrete geometric mechanism—an explicit direction of feasible descent—that is invisible to purely region-based reasoning.

From a solver perspective, this suggests a natural division of labor. Directional certification may be applied as a preprocessing step, prior to global optimization, to either (i) certify unboundedness outright by identifying a feasible descent ray, or (ii) return an inconclusive outcome that carries meaningful geometric information, namely that any unboundedness mechanism must be directionally fragile and therefore undetectable by finite truncation alone. In the latter case, solvers may proceed with bounded relaxations or branching strategies with a clearer understanding of the structural regime they are operating in. Importantly, the certificate does not replace existing solvers, nor does it attempt to integrate into their internal logic; rather, it provides asymptotic information that complements finite-region methods and clarifies the geometric source of their successes and limitations.

Viewed in this light, the contribution of the present work is not merely a new test for unboundedness, but a geometric framework for interpreting when and why unboundedness is algorithmically detectable. By separating directionally robust mechanisms from undectable ones, the certificate explains observed solver behavior, delineates the limits of finite-region reasoning, and provides a principled tool for exposing unboundedness before expensive global search is undertaken.

\subsubsection*{Data Availability}
Data sharing not applicable to this article as no datasets were generated or analysed during the current study

\subsection*{Funding and Conflict of Interest Statement} 
The authors received partial support from the Arizona State University, no external funding organizations supported this study. The authors declare they have no financial interests in the outcome of this research. 

\bibliography{sn-bibliography}
\end{document}